\theoremstyle{plain}
\newtheorem{prop}{Proposition}[section]
\newtheorem{conj}[prop]{Conjecture}
\newtheorem{thm}[prop]{Theorem}
\newtheorem{lem}[prop]{Lemma}
\newtheorem{cor}[prop]{Corollary}
\newtheorem{example}[prop]{Example}
\def\semicolon{;}
\def\applytolist#1{
    \expandafter\def\csname multi#1\endcsname##1{
        \def\multiack{##1}\ifx\multiack\semicolon
            \def\next{\relax}
        \else
            \csname #1\endcsname{##1}
            \def\next{\csname multi#1\endcsname}
        \fi
        \next}
    \csname multi#1\endcsname}
\def\calc#1{\expandafter\def\csname c#1\endcsname{{\mathcal #1}}}
\def\bbc#1{\expandafter\def\csname bb#1\endcsname{{\mathbb #1}}}
\def\bfc#1{\expandafter\def\csname bf#1\endcsname{{\mathbf #1}}}
\newcommand{\fsl}{\mathfrak{sl}_2}
\newcommand{\complexzero}{0\rightarrow\mathbb Q [\beta]\rightarrow 0}
\newcommand{\complexk}[1]{0\rightarrow\mathbb Q [\beta]\xrightarrow{\beta^{#1}}{}\mathbb Q [\beta]\rightarrow 0}
\newcommand{\red}[1]{{\color{red}#1}}
\newcommand{\mmain}[1]{{\color{blue}\textbf{\tt #1}}}
\newcommand{\arxiv}[1]{\href{http://arxiv.org/abs/#1}{\tt arXiv:\nolinkurl{#1}}}
\definecolor{dark-red}{rgb}{0.7,0.25,0.25}
\definecolor{dark-blue}{rgb}{0.15,0.15,0.55}
\definecolor{medium-blue}{rgb}{0,0,0.65}
\DeclareMathOperator{\tr}{tr}
\title{Computing annular Khovanov homology}
\author{Hilary Hunt, Hannah Keese, Anthony Licata, Scott Morrison \\ Australian National University}
\begin{document}
\maketitle

\begin{abstract}
We define a third grading on Khovanov homology, which is an invariant of annular links but changes by $\pm 1$ under stabilization. We illustrate the use of our computer implementation, and give some example calculations.
\end{abstract}

Annular Khovanov homology was first defined in \cite{MR2113902}, as a special case of a construction for a link homology theory on $I$-bundles over surfaces. It has been investigated in a number of articles (e.g. \cite{MR2728482, MR3147412, MR2866927, 1212.2222, 1303.1986, 1305.2183}), and \cite{GLW} begins exploring the representation-theoretic context, giving annular Khovanov homology the structure of a representation of the $\mathfrak{sl}_2$ current algebra (see also \cite{KeeseThesis, QR}). The purpose of this paper is to discuss a closely related annular link invariant, which recovers both annular Khovanov homology and the usual Khovanov homology as specializations.  While much of this mathematics is well known to people working in Khovanov homology, not all appears explicitly in the literature.
In order to explain our computer implementation we've included some background material on homological algebra to make the description self-contained.

\section{The annular grading on Khovanov homology}
Given a point in the complement of a link diagram, the chain groups in the Khovanov complex can be given a third `annular' grading, as follows.

In each complete resolution of a link diagram, three types of cycles are distinguished: cycles that do not enclose the marked point, called trivial cycles, and cycles that enclose the marked point, called either even or odd nontrivial cycles, which are distinguished by the parity of their nesting depth.

Each cycle is assigned a 2-dimensional vector space spanned by symbols $\{v_+, v_-\}$ with Euler gradings $+1$ and $-1$. When the cycle is trivial, both symbols have annular grading zero. When the cycle is non-trivial, of either parity, $v_+$ has annular grading $+1$ and $v_-$ has annular grading $-1$.

The usual Khovanov differential can be decomposed as $d_{Kh}=d_{Kh}^0+d_{Kh}^{-2}$, where $d_{Kh}^0$ preserves the annular grading and $d_{Kh}^{-2}$ shifts the annular grading down by two. The  sutured annular Khovanov homology of a link diagram is just defined as the homology with respect to the differential $d_{Kh}^0$.

(In fact, the vector spaces associated to cycles can be given the structure of $\fsl$ representations: trivial cycles are assigned $V_0 \oplus V_0$, while odd non-trivial cycles are assigned the standard representation $V_1$, and even non-trivial cycles are assigned the dual representation $V_1^*$. The annular grading then agrees with the $\fsl$ weight space grading. The $\fsl$ action only commutes with the $d^{Kh}_0$ term of the differential, so descends to the sutured annular Khovanov homology, but not to any of the other homologies discussed here.)

We're also interested in the degeneration of sutured annular Khovanov homology to the usual Khovanov homology. Rather than working with spectral sequences, we introduce a parameter $\beta$ to control the grading shifts, and work over the ring $\bbQ[\beta]$. We give the parameter $\beta$ the annular grading $+2$. 

\begin{lem}
The differential $\tilde{d}=d_{Kh}^0 + \beta d_{Kh}^{-2}$ gives a graded complex of free $\bbQ[\beta]$ modules whose homotopy type is an annular link invariant.
\end{lem}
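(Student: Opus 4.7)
The plan is to carry out two steps: first verify that $\tilde{d}$ is a differential on a free $\bbQ[\beta]$-module, and then promote the standard Reidemeister chain homotopy equivalences for ordinary Khovanov homology to $\bbQ[\beta]$-linear equivalences for $\tilde{d}$.

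For the first step, I would expand $d_{Kh}^2 = (d_{Kh}^0 + d_{Kh}^{-2})^2 = 0$ and separate terms by their annular-grading shift:
\[
(d_{Kh}^0)^2 + \bigl(d_{Kh}^0 d_{Kh}^{-2} + d_{Kh}^{-2} d_{Kh}^0\bigr) + (d_{Kh}^{-2})^2 = 0.
\]
The three groupings shift annular grading by $0$, $-2$ and $-4$ respectively, and hence cannot cancel against each other, so each vanishes independently. Substituting into $\tilde{d}^2$ produces the same three expressions with coefficients $1$, $\beta$, $\beta^2$, so $\tilde{d}^2=0$ drops out immediately. Freeness over $\bbQ[\beta]$, and the fact that $\tilde{d}$ is homogeneous of annular degree $0$ once $\beta$ is assigned annular degree $+2$, are immediate from the construction.

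For invariance, it suffices to handle the three Reidemeister moves performed in the complement of the marked point. For each such move, ordinary Khovanov theory supplies explicit chain maps $\phi, \psi$ and chain homotopies $h$ built as sums of local cobordism pieces (saddles, cups, caps, and identities). Each such piece has a well-defined annular-grading shift determined by the local configuration of trivial versus nontrivial cycles, so $\phi, \psi, h$ decompose into annular-homogeneous components $\phi = \sum_k \phi^{-2k}$ and similarly for the others. I would then define the $\beta$-deformed maps $\tilde{\phi} = \sum_k \beta^k \phi^{-2k}$, and analogously $\tilde\psi$, $\tilde h$. Each of the defining identities (the chain-map identity and the two chain-homotopy identities) holds in each annular degree separately in the ordinary Khovanov setting, exactly as for $\tilde{d}^2 = 0$ above, and reassembling these pieces with the appropriate powers of $\beta$ yields the corresponding $\bbQ[\beta]$-linear identities for $\tilde\phi$, $\tilde\psi$, $\tilde h$.

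The main obstacle is the final verification that the Khovanov Reidemeister maps genuinely admit such a decomposition into annular-homogeneous pieces. This is not a deep point, but it does require being explicit about the form of the chosen Reidemeister maps; the argument is essentially local, since each elementary cobordism out of which they are built has a well-defined annular shift determined by the local configuration of trivial and nontrivial cycles. Conceptually, the statement is that Khovanov homology is annular-filtered and that $(C, \tilde{d})$ is the Rees construction for this filtration, inheriting its invariance from the underlying filtered chain homotopy equivalence realized by the standard Reidemeister maps.
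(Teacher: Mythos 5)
Your proposal is correct, but it takes a genuinely different --- and in fact more careful --- route than the paper. The paper's proof never deforms the Reidemeister maps at all: it takes the ordinary chain map $f$ for a Reidemeister move performed away from the puncture, \emph{asserts} that $f$ preserves the annular grading, and then splits the single identity $f d_{Kh} = d_{Kh} f$ into annular-homogeneous pieces to conclude $d_{Kh}^0 f = f d_{Kh}^0$ and $d_{Kh}^{-2} f = f d_{Kh}^{-2}$ separately, so that the $\beta$-linear extension of $f$ itself commutes with $\tilde{d}$; no powers of $\beta$ ever enter the maps. You instead assume only that the Reidemeister maps and homotopies are \emph{filtered} --- decomposing as $\phi = \sum_k \phi^{-2k}$ with non-positive even shifts --- and pass to the Rees-type deformations $\tilde{\phi} = \sum_k \beta^k \phi^{-2k}$. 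Your weaker hypothesis is arguably the right one: the standard R2 and R3 equivalences contain saddle correction terms, and a saddle acting on nontrivial circles genuinely has a degree $-2$ component (for instance, splitting a nontrivial circle into a trivial and a nontrivial one sends $v_+ \mapsto v_+ \otimes v_- + v_- \otimes v_+$, whose two terms sit in annular degrees $-1$ and $+1$), so the paper's blanket claim that $f$ preserves the annular grading does not hold verbatim for the usual choices of maps, while your filtered version does apply to them; the verification you flag is routine and local exactly as you say, since births and deaths occurring away from the puncture only involve trivial circles, and merges and splits always shift annular degree by $0$ or $-2$. You also explicitly deform the homotopies $h$, whereas the paper checks only that $f$ is a chain map for $\tilde{d}$ and leaves the corresponding statements for homotopy inverses and homotopies implicit. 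What the paper's version buys is brevity whenever strict degree-preservation is available; what yours buys is robustness (it works for any filtered choice of Reidemeister data) together with the conceptual identification of the invariant as the Rees complex of the annular filtration, which is precisely what underlies the $\beta = 0$ and $\beta = 1$ specializations and the spectral-sequence discussion later in the paper.
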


\begin{proof}
Let $f$ be a chain map for an annular Reidemeister move (that is, a Reidemeister move performing an isotopy that stays away from the puncture).  Then $f$ commutes with the Khovanov differential $d_{Kh}$.  Furthermore, since $f$ and $d_{Kh}^0$ preserve the annular grading, and $d_{Kh}^{-2}$ is in degree $-2$, $f$ must commute with each term in the decomposition of $d_{Kh}$ separately: $d_{Kh}^0 f= f d_{Kh}^0$ and $d_{Kh}^{-2} f = f d_{Kh}^{-2}$.  Thus $f$ commutes with $\tilde{d}$.
\end{proof}

Further, it is easy to see that under the isotopy that moves a single strand across the puncture, each homogeneous representative of sutured annular Khovanov homology changes annular grading by $\pm 1$.

The invariant we have defined is an up-to-homotopy complex in the category of free $\bbQ[\beta]$ graded modules, with differentials all in degree 0. We now describe in more detail exactly what such a complex looks like.
The objects are just direct sums of $\bbQ[\beta]$ in different gradings, and differentials are matrices with rows and columns indexed by these direct sums. Each matrix entry is then a degree 0 map from $\bbQ[\beta]\{n\}$ to $\bbQ[\beta]\{m\}$ for some integers $n$ and $m$. The only such maps are $z \beta^{n-m}$ if $n-m \geq 0$, for some $z \in \bbQ$, and zero otherwise.

In the additive category of complexes we have:

\begin{lem}
A complex of free $\bbQ[\beta]$ graded modules is isomorphic to a direct sum of shifted copies of
\[
\complexzero
\]
and
\[
\complexk{k}
\]
for some $k \geq 0$.

\end{lem}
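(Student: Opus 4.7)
The plan is to induct on the total $\mathbb{Q}[\beta]$-rank of the complex, peeling off one direct summand of the claimed form at each step. In the base case of rank zero there is nothing to prove; if every differential is zero the complex is manifestly a direct sum of shifted copies of $\complexzero$. Otherwise I will produce a direct summand isomorphic to a shifted copy of $\complexk{k}$ for some $k \geq 0$, and apply the inductive hypothesis to the complementary subcomplex.

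To find such a summand, pick an entry of minimal $\beta$-exponent among all nonzero entries appearing in any matrix of any differential; say it lies in $d_i$ at position $(r,s)$ with value $z\beta^k$ for some $z \in \mathbb{Q}^\times$. After rescaling the $s$th basis vector of $C_i$ by $z^{-1}$, this entry becomes exactly $\beta^k$. By minimality of $k$, every other entry in column $s$ or row $r$ of $d_i$ is a $\mathbb{Q}[\beta]$-multiple of $\beta^k$, so we may use graded row operations (changing basis in $C_{i+1}$) to clear the rest of column $s$, and graded column operations (changing basis in $C_i$) to clear the rest of row $r$. After both, writing $e_s$ for the resulting basis vector of $C_i$ and $e_r$ for that of $C_{i+1}$, we have $d_i(e_s) = \beta^k e_r$ and $d_i$ sends no other basis vector of $C_i$ into the $e_r$-summand.

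The crux is to check that the two-term subcomplex on $e_s, e_r$ is actually a direct summand of the entire complex, not merely of $d_i$: the basis changes above also modify $d_{i-1}$ and $d_{i+1}$, and could a priori couple this candidate to the rest. This is where $d^2 = 0$ combined with $\beta$-torsion-freeness of the free $\mathbb{Q}[\beta]$-modules $C_{i+1}$ and $C_{i+2}$ saves us. Applying $d_{i+1}$ to $d_i(e_s) = \beta^k e_r$ yields $\beta^k d_{i+1}(e_r) = 0$, hence $d_{i+1}(e_r) = 0$; and for any $x \in C_{i-1}$, writing $d_{i-1}(x) = \sum_t y_t e_t$ in the new basis of $C_i$, the $e_r$-coefficient of $d_i d_{i-1}(x)$ equals $y_s \beta^k$ (no other basis vector contributes, since we cleared row $r$), which must vanish and therefore forces $y_s = 0$. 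The subcomplex on $e_s, e_r$ thus splits off as a shifted copy of $\complexk{k}$, and induction on the complementary summand completes the proof. I expect the main obstacle to be exactly this final verification: one could imagine simultaneous Smith normal forms on every differential, but the basis changes propagate and the cleanest way to make them cohere is to push the work onto $d^2 = 0$ plus freeness of $C_\bullet$.
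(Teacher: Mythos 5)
Your proof is correct and takes essentially the same route as the paper: both perform Gaussian elimination on a nonzero entry of lowest $\beta$-degree, use that minimality to clear its row and column by graded basis changes, and rely (as you make explicit) on $d^2=0$ together with torsion-freeness of free $\mathbb{Q}[\beta]$-modules to see that the adjacent differentials decouple from the resulting two-term summand. The only difference is presentational: the paper packages the pivot-splitting as a $2\times 2$ block isomorphism following \cite[Lemma 4.2]{math.GT/0606318} and iterates, while you verify the splitting entrywise and organize the iteration as an induction on rank.
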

\begin{proof}
If all the differentials are zero, we are done.

Otherwise, one of the differentials, say $d_k$, has a non-zero entry. In particular, it has a lowest degree non-zero entry. We may assume (by permuting direct sums) that this is the top left entry, and so we have

\[
d_k=\left(
\begin{array}{c|c}
a\beta^n&\gamma\\ \hline
\kappa &\omega
\end{array}
\right)
\]
where $\beta^n$ divides $\gamma,\kappa,$ and $\omega$.
Following the argument in \cite[Lemma 4.2]{math.GT/0606318} 
we see that 
\[
\cdots \to [A]
\xrightarrow{\left(\begin{array}{c}\varphi_1 \\ \varphi_2\end{array}\right)}
\left[\begin{array}{c}B \\ C\end{array}\right]
\xrightarrow{\left(
\begin{array}{cc}
a\beta^n&\gamma\\
\kappa &\omega
\end{array}
\right)}
\left[\begin{array}{c}D \\ E\end{array}\right]
\xrightarrow{\left(\begin{array}{cc}\psi_1 & \psi_2\end{array}\right)}
[F] \to \cdots
\]
is isomorphic to the direct sum 
\[
\cdots \to [A]
\xrightarrow{\left(\begin{array}{c}0 \\ \varphi_2\end{array}\right)}
\left[\begin{array}{c}B \\ C\end{array}\right]
\xrightarrow{\left(
\begin{array}{cc}
a\beta^n&0\\
0 &\omega-\kappa (a\beta^{n})^{-1} \gamma
\end{array}
\right)}
\left[\begin{array}{c}D \\ E\end{array}\right]
\xrightarrow{\left(\begin{array}{cc}0 & \psi_2\end{array}\right)}
[F] \to \cdots
\]
even when $a\beta^n$ is not an isomorphism. Here by $\kappa (a\beta^{n})^{-1} \gamma$ we mean $a^{-1} \kappa \gamma'$, where $\gamma = \beta^n \gamma'$.  Hence, we can break down the whole complex into a direct sum of shifted copies of
\[
\complexzero
\]
and
\[
\complexk{k}.
\]
(Recall here that `shifted' refers to all of Euler gradings, homological gradings, and annular gradings.)%
\end{proof}

When we pass to the homotopy category we have:

\begin{cor}
\label{cor:he}
Such a complex is homotopy equivalent to a direct sum of shifted copies of
\[
\complexzero
\] 
and
\[
\complexk{k}
\]
for some $k > 0$.

\end{cor}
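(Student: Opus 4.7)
The plan is to deduce the corollary directly from the preceding lemma by observing that the only difference between the two statements is the range of $k$: the lemma allows $k \geq 0$, while the corollary requires $k > 0$. So it suffices to show that the $k = 0$ summand, namely $\complexk{0}$, vanishes in the homotopy category.

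First I would apply the preceding lemma to obtain an \emph{isomorphism} of complexes between the given complex and a direct sum of shifted copies of $\complexzero$ and $\complexk{k}$ for various $k \geq 0$. Since isomorphism implies homotopy equivalence, and since direct sums of homotopy equivalences are homotopy equivalences, it suffices to replace each summand by a homotopy equivalent complex.

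The key step is then to observe that the complex $\complexk{0}$, namely $0 \to \bbQ[\beta] \xrightarrow{\mathrm{id}} \bbQ[\beta] \to 0$, is null-homotopic, and therefore homotopy equivalent to the zero complex. The contracting homotopy is just the identity map $\bbQ[\beta] \to \bbQ[\beta]$ in the opposite direction from the differential; since the differential is itself the identity, the standard check $\mathrm{id} \circ \mathrm{id} + 0 = \mathrm{id}$ confirms that this is a chain homotopy between the identity chain map and the zero chain map on $\complexk{0}$. (By contrast, for $k > 0$ the multiplication-by-$\beta^k$ map is not invertible in $\bbQ[\beta]$, so no such contracting homotopy exists and these summands genuinely survive.)

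Putting these observations together, I would conclude that the complex is homotopy equivalent to the direct sum obtained from the lemma's decomposition after discarding every $\complexk{0}$ summand, which is a direct sum of shifted copies of $\complexzero$ and $\complexk{k}$ for $k > 0$, as required. There is no real obstacle here; the entire content of the corollary beyond the lemma is the contractibility of $\complexk{0}$.
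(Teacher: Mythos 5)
Your proposal is correct and matches the paper's own argument: the paper likewise observes that $\beta^0$ is invertible, hence every summand $\complexk{0}$ is contractible, and discards these summands from the decomposition given by the preceding lemma. Your version merely spells out the contracting homotopy explicitly, which the paper leaves implicit.
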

\begin{proof}
The map $\beta^0$ is invertible. This makes every copy of 
\[
\complexk{0}
\]
contractible. Hence the complex is homotopy equivalent to the complex where every copy of this summand is removed. We are left with a direct sum of shifted copies of
\[
\complexzero
\]
and
\[
\complexk{k}
\]
for some $k > 0$.
\end{proof}

In fact, the up-to-homotopy representative above is unique.
\begin{lem}
Suppose that 
$$ \mathcal C = W_0 \otimes \left(\complexzero\right) \oplus \bigoplus_{i \geq 1} W_i \otimes \left(\complexk{i}\right) $$
and 
$$ \mathcal C' = W_0' \otimes \left(\complexzero\right) \oplus \bigoplus_{i \geq 1} W_i' \otimes \left(\complexk{i}\right) $$
are homotopic, where the $W_i$ and $W_i'$ are triply graded vector spaces.
Then $W_i \cong W_i'$ for each $i \geq 0$.
\end{lem}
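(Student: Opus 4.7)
My plan is to extract each $W_i$ directly from the homology of $\mathcal C$, and then appeal to uniqueness of the invariant factor decomposition for graded $\mathbb Q[\beta]$-modules. Since homotopic complexes have isomorphic homology as triply graded $\mathbb Q[\beta]$-modules, it suffices to show that $H_*(\mathcal C)$ determines each $W_i$.

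First I would compute the homology of the two building blocks. The complex $\complexzero$ has zero differential, so it contributes $\mathbb Q[\beta]$ to the homology. For $k \geq 1$, the differential $\beta^k$ in $\complexk{k}$ is injective on $\mathbb Q[\beta]$, so this complex contributes the cokernel $\mathbb Q[\beta]/(\beta^k)$ in the homological degree of the target. Summing over the two decompositions gives
\[
H_*(\mathcal C) \;\cong\; W_0 \otimes \mathbb Q[\beta] \;\oplus\; \bigoplus_{i \geq 1} W_i \otimes \mathbb Q[\beta]/(\beta^i),
\]
where each basis vector of $W_i$ carries its own shift in homological, Euler, and annular grading, and analogously for $H_*(\mathcal C')$.

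Next I would invoke the structure theorem for finitely generated graded modules over the graded principal ideal domain $\mathbb Q[\beta]$, applied separately in each fixed pair of homological and Euler gradings. This produces a unique decomposition into indecomposable cyclic summands of the forms $\mathbb Q[\beta]\{n\}$ and $\mathbb Q[\beta]/(\beta^i)\{n\}$, and their multiplicities in each annular grading are thus well-defined invariants of $H_*(\mathcal C)$. The free summands recover $W_0$, the $\mathbb Q[\beta]/(\beta^i)$-summands recover $W_i$ for $i \geq 1$, and the same analysis applied to $\mathcal C'$ yields $W_i'$. Since $H_*(\mathcal C) \cong H_*(\mathcal C')$, we conclude $W_i \cong W_i'$ for all $i \geq 0$.

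I do not anticipate a real obstacle here; the only technicality is ensuring the graded structure theorem applies, which requires finite generation in each homological/Euler bigrading. This is automatic in the setting of a link diagram, where the Khovanov chain groups are finite dimensional in each grading, so each $W_i$ is finite dimensional in each triple degree.
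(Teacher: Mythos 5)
Your proof is correct, but it takes a genuinely different route from the paper's. You compute the homology of $\mathcal C$ over $\mathbb Q[\beta]$ itself, obtaining $W_0 \otimes \mathbb Q[\beta] \oplus \bigoplus_{i\geq 1} W_i \otimes \mathbb Q[\beta]/(\beta^i)$ (with shifts), and then invoke uniqueness in the structure theorem for finitely generated graded modules over the graded principal ideal domain $\mathbb Q[\beta]$, applied in each fixed homological/Euler bidegree where $\beta$ acts with annular degree $+2$. The paper never invokes the structure theorem: it extracts $W_0$ as $H(\mathcal C, \mathbb Q[\beta]/(\beta-1))$, computes the truncated homologies $H(\mathcal C, \mathbb Q[\beta]/\beta^n)$ summand by summand, and isolates $(1+t)W_n$ via the signed combination $-H(\mathcal C,\mathbb Q[\beta]/\beta^{n+2}) + 2H(\mathcal C,\mathbb Q[\beta]/\beta^{n+1}) - H(\mathcal C,\mathbb Q[\beta]/\beta^{n})$, recovering the graded dimensions of all the $W_k$ as homotopy invariants. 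Your approach buys directness and conceptual clarity: the $W_i$ appear as canonical pieces of a single invariant (the $\mathbb Q[\beta]$-homology, with torsion sorted by annihilator), and homotopy invariance is immediate since a homotopy equivalence induces a triply graded module isomorphism on homology. The paper's approach buys elementarity (only dimension counts of specializations, no classification theorem) and its base changes $\mathbb Q[\beta]/\beta^n$ and $\beta=1$ are exactly the specializations reused in the subsequent spectral-sequence discussion and the computer implementation. Note that both arguments quietly require degreewise finite dimensionality --- yours to apply the structure theorem and count multiplicities, the paper's to make sense of the formal subtraction of Poincar\'e series --- and you rightly flag this; it holds in the intended application because the Khovanov complex of a link diagram is finitely generated over $\mathbb Q[\beta]$.
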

\begin{proof}
First, $H(\mathcal C, \mathbb Q[\beta]/(\beta - 1)) \cong W_0$, so we have $W_0 \cong W_0'$.

Consider adding the relation $\beta^i = 0$. Each $\complexk{k}$ summand then has homology
\begin{align*}
\Big((z^{2(\min(i-k,0))} + z^{2(\min(i-k,0)+1)} & + ... + z^{2(i-1)}) \;+  \\
 &  + \;t(1 + z^2 + ... + z^{2(\min(k-1,i-1))})\Big)\mathbb Q.
\end{align*}
If we ignore the annular grading this is just
$(1+t) \mathbb Q^{i-1}$ when $i \leq k$, and $(1+t) \mathbb Q^k$ when $i > k$.

Thus formally we have
\begin{align*}
H(\mathcal C, \mathbb Q[\beta]/\beta^{n+1}) - H(\mathcal C, \mathbb Q[\beta]/\beta^n) & = W_0 \oplus (1+t) \bigoplus_{k \geq n} W_k
\end{align*}
and
\begin{align*}
- H(\mathcal C, \mathbb Q[\beta]/\beta^{n+2}) + 2 H(\mathcal C, \mathbb Q[\beta]/\beta^{n+1}) -  H(\mathcal C, \mathbb Q[\beta]/\beta^{n}) & = (1+t) W_n 
\end{align*}
This allows us
to compute the graded dimensions of the $W_k$, as homotopy invariants.
\end{proof}

\begin{thm}
There is a collection of triply graded vector spaces over $\bbQ$, $\{W_i\}_{i \geq 0}$, which are invariants of an annular link.
\begin{itemize}
\item Forgetting the annular grading of $W_0$ recovers the usual Khovanov homology of the link.
\item Taking the direct sum $$W_0 \oplus \bigoplus_{i \geq 1} (\mathbb Q \oplus t \mathbb Q) \otimes W_i$$ recovers the triply graded sutured annular Khovanov homology.
\end{itemize}
\end{thm}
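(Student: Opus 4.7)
The plan is to define the $W_i$ via the canonical decomposition from Corollary \ref{cor:he}, and then to recover each of the two named invariants by a base change in $\beta$. By the first Lemma, the Khovanov chain complex $(C_{Kh}(L) \otimes_{\bbQ} \bbQ[\beta], \tilde d)$ is a triply graded complex of free $\bbQ[\beta]$-modules whose homotopy type is an annular link invariant. Corollary \ref{cor:he} then supplies a homotopy equivalence
\[
(C_{Kh}(L), \tilde d) \;\simeq\; W_0 \otimes \left(\complexzero\right) \;\oplus\; \bigoplus_{i \geq 1} W_i \otimes \left(\complexk{i}\right),
\]
and the uniqueness Lemma above promotes the triply graded vector spaces $W_i$ to invariants of $L$ in their own right.

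For the first bullet I would specialize by tensoring with $\bbQ[\beta]/(\beta - 1)$. Since $\beta$ carries annular grading $+2$, this base change collapses the annular grading. Applied to the original complex, it sends $\tilde d = d_{Kh}^0 + \beta d_{Kh}^{-2}$ to the usual Khovanov differential $d_{Kh} = d_{Kh}^0 + d_{Kh}^{-2}$, so the resulting homology is ordinary (bigraded) Khovanov homology. Applied to the canonical form, each $\complexk{i}$ becomes $0 \to \bbQ \xrightarrow{1} \bbQ \to 0$, which is acyclic, while $\complexzero$ contributes a single copy of $\bbQ$. What survives is $W_0$, now with its annular grading forgotten, matching the first bullet.

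For the second bullet I would instead tensor with $\bbQ[\beta]/(\beta)$. On the original complex this replaces $\tilde d$ by $d_{Kh}^0$, whose homology is by definition the triply graded sutured annular Khovanov homology. On the canonical form, $\complexzero$ still contributes $W_0$, while each $\complexk{i}$ becomes $0 \to \bbQ \xrightarrow{0} \bbQ \to 0$ with homology of rank $1$ in two consecutive homological degrees, i.e.\ $\bbQ \oplus t\bbQ$. Summing over $i$ produces the advertised decomposition. Because tensoring with either quotient of $\bbQ[\beta]$ preserves homotopy equivalence, the two calculations of the specialized homologies agree. There is no substantive obstacle beyond carefully tracking the three gradings through the base changes --- all the Euler, homological, and annular shifts are absorbed into the $W_i$ themselves, so the theorem drops out of the classification and uniqueness results already established.
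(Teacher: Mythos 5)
Your proposal is correct and is essentially the paper's own argument: the paper likewise defines the $W_i$ via the unique decomposition of the $\mathbb{Q}[\beta]$-module complex into shifted copies of the two standard summands, then specializes at $\beta = 1$ (each summand $\complexk{i}$ becomes contractible, leaving $W_0$ with the annular grading collapsed, i.e.\ ordinary Khovanov homology) and at $\beta = 0$ (each such summand contributes $\mathbb{Q} \oplus t\mathbb{Q}$, yielding sutured annular Khovanov homology). Your write-up merely makes explicit the appeals to the invariance and uniqueness lemmas, and the fact that base change preserves homotopy equivalence, which the paper compresses into the single word ``uniquely.''
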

\begin{proof}
Write the triply graded complex $CKh(L)$ of $\bbQ[\beta]$ modules uniquely as 
\begin{align*}
W_0 \otimes & \left(\complexzero\right) \oplus \\
            & \bigoplus_{i > 0} W_i \otimes \left( \complexk{i} \right).
\end{align*}
With $\beta = 1$ (usual Khovanov homology), $\complexk{i}$ becomes contractible, while with $\beta = 0$ (sutured annular Khovanov homology), $\complexk{i}$ becomes $\mathbb Q \oplus t \mathbb Q$.
\end{proof}

In fact, this up-to-homotopy representative encodes all the information usually described by the spectral sequence converging from the annular Khovanov homology to the usual Khovanov homology. The successive pages of the spectral sequence are given by 
$E_{k+1} = H(E_k, d_k)$, where as a vector space $E_0$ is the complex $CKh(L)$, and the differentials $d_k$ are the coefficients of the $\beta$ expansion of the differential on $CKh(L)$. We find that
$$E_j \cong W_0 \oplus (1+t) \bigoplus_{k > j/2} W_k.$$
To see this, 
 it suffices to consider the spectral sequence when the chain complex is a single summand $\complexk{k}$. (Note that the homotopy equivalence over $\mathbb Q[\beta]$ described in Corollary \ref{cor:he} specializes at $\beta = 1$ to a homotopy equivalence over $\mathbb Q$.) This complex specializes at $\beta = 1$ to $0 \rightarrow \mathbb Q \xrightarrow{1} \mathbb Q\{2k\} \rightarrow 0$, so we have $$d = \sum_{j \geq 0} d_k = d_{2k} = 1.$$ Thus the $E_0$ page is just $(1+t)\mathbb Q$, and this survives through to the $2k$-th page, on which the induced differential appears, killing everything thereafter.

\section{Implementation}
We have implemented the invariant described above, in the special case of annular links which are presented as braid closures (with the braid closure happening around the puncture in the annulus; this restriction could be relaxed relatively easily).

To use this implementation, you will first need to install a copy of the {\tt KnotTheory`} mathematica package which is available at
{\url{http://katlas.org/wiki/The_Mathematica_Package_KnotTheory`}}. You will also need the Mathematica notebook {\tt KhBraids.nb}, available 
in the {\tt arXiv} sources of this note, or from
the {\tt github} repository { \url{https://github.com/semorrison/KhBraids/}}. After running all the cells in that notebook, you will have a number of functions available, which are illustrated by the following examples.

Lines typeset in blue represent input you might type into Mathematica; the following line in black represents the output you might receive.

One can of course calculate the Khovanov homology of a braid closure; the output is the Poincar\'e polynomial, with $q$ tracking the Euler grading and $t$ the homological grading. 
\vspace{2mm}
\begin{samepage}
\hrule\vspace{2mm}
\noindent\mmain{Kh[BR[Knot["3\_1"]]]}
\begin{verbatim}
1/q^3+1/q+1/(q^9 t^3)+1/(q^5 t^2)
\end{verbatim}
\hrule
\end{samepage}
\vspace{3mm}
To calculate sutured annular Khovanov homology, we use the optional second parameter, which allows specifying a differential. The differentials \mmain{AnnularDifferential} and \mmain{LeeDifferential} are defined by the package; you can define others yourself.
\vspace{2mm}\hrule\vspace{2mm}
\noindent\mmain{Kh[BR[Knot["3\_1"]], AnnularDifferential]}
\begin{verbatim}
1/q^5+1/q^3+1/q+1/(q^9 t^3)+1/(q^5 t^2)+1/(q^5 t)
\end{verbatim}
\hrule\vspace{3mm}
A different command gives the decomposition of sutured annular Khovanov homology into $\mathfrak{sl_2}$ representations. Here $V[i]$ denotes the irreducible representation with highest weight $i$. 
\vspace{2mm}\hrule\vspace{2mm}
\noindent\mmain{AnnularKh[BR[Knot["3\_1"]]]}
\begin{verbatim}
V[0]/(q^9 t^3)+V[0]/(q^5 t^2)+V[0]/(q^5 t)+V[2]/q^3
\end{verbatim}
\hrule\vspace{3mm}
The invariant over $\mathbb{Q}[\beta]$ described above can also be calculated using
the command \mmain{SpectralAnnularKh}; an example appears in the next section.
Finally, one can ask for explicit representatives of Khovanov homology (with respect to your preferred differential).
\begin{samepage}
\vspace{2mm}\hrule\vspace{2mm}
\noindent\mmain{KhRepresentativesInGrading[BR[Knot["3\_1"]], -1, -5, AnnularDifferential]}
\begin{verbatim}
{{1,1,1}}
\end{verbatim}
\hrule\vspace{3mm}
\end{samepage}
These are given with respect to an ordered basis which can be viewed via the command \mmain{AllBasisVectorsInGrading}. The particulars of the notation are left as an exercise for the reader! 
\vspace{2mm}\hrule\vspace{2mm}
\noindent\mmain{AllBasisVectorsInGrading[BR[Knot["3\_1"]], -1, -5]}
\begin{verbatim}
{
  BasisVector[Resolution[BR[2,{-1,-1,-1}],{0,0,1},{{2,4,6,8,9,11,13,14}}],
    {{2,4,6,8,9,11,13,14}->-1}],
  BasisVector[Resolution[BR[2,{-1,-1,-1}],{0,1,0},{{2,4,5,7,10,12,13,14}}],
    {{2,4,5,7,10,12,13,14}->-1}],
  BasisVector[Resolution[BR[2,{-1,-1,-1}],{1,0,0},{{1,3,6,8,10,12,13,14}}],
    {{1,3,6,8,10,12,13,14}->-1}]
}
\end{verbatim}
\mmain{CubeOfResolutions[BR[Knot["3\_1"]]] //TableForm}
\begin{verbatim}
Resolution[BR[2,{-1,-1,-1}],{0,0,0},{{2,6,10,13},{4,8,12,14}}]
Resolution[BR[2,{-1,-1,-1}],{0,0,1},{{2,4,6,8,9,11,13,14}}]
Resolution[BR[2,{-1,-1,-1}],{0,1,0},{{2,4,5,7,10,12,13,14}}]
Resolution[BR[2,{-1,-1,-1}],{0,1,1},{{7,9},{2,4,5,11,13,14}}]
Resolution[BR[2,{-1,-1,-1}],{1,0,0},{{1,3,6,8,10,12,13,14}}]
Resolution[BR[2,{-1,-1,-1}],{1,0,1},{{1,11,13,14},{3,6,8,9}}]
Resolution[BR[2,{-1,-1,-1}],{1,1,0},{{3,5},{1,7,10,12,13,14}}]
Resolution[BR[2,{-1,-1,-1}],{1,1,1},{{3,5},{7,9},{1,11,13,14}}]
\end{verbatim}
\hrule\vspace{3mm}

\section{Examples}
As an illustration of the strength of the invariant described here, we now compute the triply graded invariants of $\tr{b_1}$ and $\tr{b_2}$, where $b_1$ and $b_2$ are non-conjugate 5-strand braids, whose trace closures are both the $8_{12}$ knot from the Rolfsen tables. In fact, these braids come from Vaughan Jones' table of braid representatives from \cite{MR0908150} and from Thomas Gittings' table of minimal braids in \cite{math.GT/0401051}.

\begin{example}
Below we show two computations, only indicating the coefficient of $E$ (the triply graded dimension of $W_0$ above). At $z=1$ both recover the usual Poincar\'e polynomial of Khovanov homology. For convenience, terms with differing powers of $z$ are highlighted in red.

\vspace{2mm}\hrule\vspace{2mm}
\noindent\mmain{
SpectralAnnularKh[BR[5,\{-1,2,-1,-3,2,4,-3,4\}]
}
\begin{align*}
\Big(t^4(q^9 z) & + t^3(q^5 \red{z^{-1}} + q^7 z) + t^2 (q^3 z^{-1} + 3 q^5 z) + t(3q z^{-1} + 2 q^3 z) + \\
           & + (3q z +3 q^{-1}z^{-1}) + t^{-1}(2q^{-3}z^{-1}+ 3 q^{-1} z) + t^{-2}(3 q^{-5}z^{-1} + q^{-3}z) + \\
           & + t^{-3}(q^{-7}z^{-1} + q^{-5}\red{z}) + t^{-4}q^{-9}z^{-1}\Big) E \\
           & + \Big( \ldots \Big) C[1]
\end{align*}

\noindent\mmain{
SpectralAnnularKh[BR[5,\{-1,2,-3,4,-3,4,-2,-1,3,2\}]
}
\begin{align*}
\Big(t^4(q^9 z^3) & + t^3(q^5 \red{z} + q^7 z) + t^2 (q^3 z^{-1} + 3 q^5 z) + t(3q z^{-1} + 2 q^3 z) + \\
           & + (3q z +3 q^{-1}z^{-1}) + t^{-1}(2q^{-3}z^{-1}+ 3 q^{-1} z) + t^{-2}(3 q^{-5}z^{-1} + q^{-3}z) + \\
           & + t^{-3}(q^{-7}z^{-1} + q^{-5}\red{z^{-1}}) + t^{-4}q^{-9}z^{-3}\Big) E \\
           & + \Big( \ldots \Big) C[1]
\end{align*}
\hrule\vspace{3mm}

This calculation shows that the closures are not isotopic as annular links,
so the braids are non-conjugate, and an isotopy of the closures as links in $S^3$ requires at least one stabilisation.
\end{example}

Although we haven't shown them, the graded dimensions, and indeed just the dimensions, of $W_1$ (that is, the coefficient of $C[1]$ above) are in fact also different.

We next make some observations about the annular gradings of stablised unknots.
Our package provides the function \mmain{Stabilization}, used for example as 
\vspace{2mm}\hrule\vspace{2mm}
\noindent\mmain{Stabilization[BR[2, \{-1, -1, -1\}], \{-1, 1\}]]}
\begin{align*}
BR[4, {-1, -1, -1, -2, 3}]
\end{align*}
\hrule\vspace{3mm}

For example, we have
\begin{samepage}
\vspace{2mm}\hrule\vspace{2mm}
\noindent\mmain{SpectralAnnularKh[Stabilization[BR[1, \{\}], \{-1, -1, 1, -1\}]]}
\begin{align*}
(q z + q^3 z^3) E + ( \ldots ) C[1]
\end{align*}
\hrule
\end{samepage}
\vspace{3mm}

The spread of annular gradings in annular Khovanov homology is simply the spread of $\mathfrak{sl}_2$ gradings in the largest irreducible representation of $\mathfrak{sl}_2$ appearing, which is determined solely by the number strands in the braid \cite{GLW}.

On the other hand, the annular spread of Khovanov homology may be more interesting. For many small examples, it is preserved under stabilization, but this is not always the case:
\vspace{2mm}\hrule\vspace{2mm}
\noindent\mmain{SpectralAnnularKh[BR[3, \{-1, 2, -1, 2\}]]}
\begin{align*}
\left(q^5 t^2 z+\frac{1}{q^5 t^2 z}+\frac{q t}{z}+\frac{z}{q t}+q z+\frac{1}{q z}\right) E + ( \ldots ) C[1]
\end{align*}
\noindent\mmain{SpectralAnnularKh[Stabilization[BR[3, \{-1, 2, -1, 2\}], \{1\}]]}
\begin{align*}
\left(q^5 t^2+\frac{1}{q^5 t^2 z^2}+q t+\frac{1}{q t}+q+\frac{1}{q z^2}\right) E + ( \ldots ) C[1]
\end{align*}
\noindent\mmain{SpectralAnnularKh[Stabilization[BR[3, \{-1, 2, -1, 2\}], \{1,1\}]]}
\begin{align*}
\left(q^5 t^2 z+\frac{1}{q^5 t^2 z^3}+\frac{q t}{z}+\frac{1}{q t z}+\frac{q}{z}+\frac{1}{q z^3}\right) E + ( \ldots ) C[1]
\end{align*}
\hrule\vspace{3mm}
Thus the figure-eight knot has annular spread of 2, as does its positive stabilization, while its double positive stabilization has annular spread of 4.

Generally, the behavior of the annular grading under stabilization seems quite complicated.

\section{Conjectures}

We conclude with a few conjectures.

\begin{conj}
The stabilized unknot $\sigma_n^{\epsilon_n} \sigma_{n-1}^{\epsilon_{n-1}} \cdots \sigma_1^{\epsilon_1}$, where $\epsilon_i = \pm 1$ has
$\dim W_0  = z^{- \sum \epsilon_i} (q z + q^{-1} z^{-1})$.
\end{conj}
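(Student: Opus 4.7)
The plan is to argue by induction on $n$. For the base case $n=0$ the braid is empty on a single strand, and its closure is the standard annular unknot of winding one. The $\bbQ[\beta]$ chain complex has zero differential and consists of the $2$-dimensional nontrivial-cycle vector space (with graded dimension $qz + q^{-1}z^{-1}$) in a single homological degree; it is already in the normal form of Corollary~\ref{cor:he}, so $W_0 = qz + q^{-1}z^{-1}$, as predicted for the empty sum of signs. Direct computation verifies the $n=1$ cases $b=\sigma_1^{\pm 1}$ as well, giving $W_0 = q + q^{-1}z^{-2}$ and $W_0 = qz^2 + q^{-1}$ respectively, both matching the conjecture.

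For the inductive step, assume the claim for $b' = \sigma_{n-1}^{\epsilon_{n-1}}\cdots\sigma_1^{\epsilon_1}$ and consider $b = \sigma_n^{\epsilon_n} b'$. I would express $CKh(b)$ as a mapping cone for the two resolutions of the new crossing $\sigma_n^{\epsilon_n}$:
\[
CKh(b) \;\simeq\; \mathrm{Cone}\bigl( CKh(D_0) \xrightarrow{\,\tilde s\,} CKh(D_1) \bigr)
\]
up to Khovanov normalization shifts, where $D_0, D_1$ are the $0$- and $1$-resolved annular diagrams of the new crossing and $\tilde s = s^0 + \beta s^{-2}$ is the saddle map decomposed into its annular-preserving and annular-lowering parts as in the paper's description of $\tilde d$. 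One of these two resolutions is $\hat b' \sqcup S^1$, with the extra circle a nontrivial puncture-enclosing loop coming from strand $n+1$; the other is a specific annular diagram obtained from $\hat b'$ by a saddle that absorbs the new strand, whose chain complex can be read off from the resolution cube of $b'$. Combining this with the normal-form decomposition of $CKh(b')$ from Corollary~\ref{cor:he} and the inductive formula $W_0(b') = z^{-\sum_{i<n}\epsilon_i}(qz + q^{-1}z^{-1})$, the problem reduces to analyzing how the four generators of $W_0(b') \otimes (qz + q^{-1}z^{-1})$ interact under $\tilde s$ with the generators in the other resolution's chain complex.

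The main obstacle is to show that after all cancellations this cone contributes exactly two $\complexzero$-summands to $W_0(b)$, sitting in annular gradings shifted by $z^{-\epsilon_n}$ relative to $W_0(b')$. The $\beta s^{-2}$ piece pairs up generators of annular grading difference $2$ into $\complexk{1}$ summands, the $s^0$ piece produces a contractible $\complexk{0}$ summand, and the rank-$2$ remainder must sit in the claimed gradings. In general such a cone analysis is subtle, since the higher $W_i(b')$ components can also participate in the cancellations and produce unexpected $W_0$-generators --- as the paper's figure-eight stabilization example illustrates, the annular spread of Khovanov homology can jump under stabilization --- so one cannot argue purely by dimension count at each annular grading. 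For the cycle-braid closures of the conjecture, however, the topological fact that $\hat b$ remains an unknot rigidly forces $\dim W_0(b) = 2$, and an explicit identification of the two surviving generators should be possible by working with canonical chain representatives (in the spirit of Lee's classes) supported on the oriented resolution of $b$, where the $n+1$ oriented-resolution circles contribute easily tracked factors of $V_1$ or $V_1^*$. Verifying that these canonical representatives carry the predicted annular gradings $z^{-\sum\epsilon_i \pm 1}$, and that no mixing with the higher $W_i(b')$ summands disturbs this count, is the key technical step.
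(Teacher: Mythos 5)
This statement is one of the paper's concluding \emph{conjectures}: the paper offers no proof of it at all, so there is nothing of the authors' to compare your argument against, and the only question is whether your sketch settles the statement. It does not, and you concede as much in your final sentence --- the ``key technical step'' you defer is not a technicality but the entire content of the conjecture. The parts you do establish are either routine or already consequences of the paper's results: the base cases are fine, and $\dim_{\mathbb Q} W_0 = 2$ with one generator in each $q$-degree $\pm 1$ is automatic from the paper's Theorem (forgetting the annular grading of $W_0$ recovers the Khovanov homology of the unknot, $q + q^{-1}$). What remains is exactly the pair of annular gradings, and your inductive step --- that the two surviving $\left(\complexzero\right)$ summands sit ``in annular gradings shifted by $z^{-\epsilon_n}$ relative to $W_0(b')$'' --- is a restatement of the conjecture, not a derivation of it. Nothing formal forces a uniform shift: the paper's own figure-eight computations show that under a single positive stabilization of $BR[3,\{-1,2,-1,2\}]$ five of the six generators of $W_0$ shift by $z^{-1}$ while the generator $qtz^{-1}$ shifts by $z^{+1}$, and under the second stabilization the generator $q^5t^2$ shifts by $z^{+1}$. (The paper proves only that moving a strand across the puncture changes each homogeneous annular grading by $\pm 1$, with no control of sign per generator.) So a per-generator analysis specific to these unknot diagrams is unavoidable, and you supply no mechanism for it.

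The mechanism you gesture at is also doubtful as stated. Lee-type canonical classes are cycles for the Lee differential, not for $\tilde d = d_{Kh}^0 + \beta d_{Kh}^{-2}$, and the paper's sample computation for the trefoil (\texttt{KhRepresentativesInGrading} returning a sum of basis vectors spread over three different resolutions) already shows that homology representatives are generically not supported on the oriented resolution alone. Moreover, over $\mathbb{Q}[\beta]$ it is not enough to exhibit two cycles in the predicted gradings: you must show they generate $\left(\complexzero\right)$ summands in the normal form of Corollary~\ref{cor:he}, i.e.\ that they are not the surviving end of some $\beta^k$ arrow belonging to a $W_k$ piece with $k \geq 1$ --- this is precisely your unaddressed ``no mixing with the higher $W_i(b')$'' caveat. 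Note also that the cone of a map between two complexes already in normal form can recombine summands nontrivially (two rank-one zero-differential summands of $CKh(D_0)$ and $CKh(D_1)$ can merge into a $\beta^k$ summand and vice versa), so the decomposition of the cone requires computing the connecting map up to homotopy, not bookkeeping of graded dimensions --- as you yourself observe, a pure dimension count at each annular grading cannot work. As it stands, your proposal is a reasonable research plan for what the paper presents as an open problem; to make progress, the inductive grading claim should be verified directly on the small cube of resolutions of $\sigma_n^{\epsilon_n}\cdots\sigma_1^{\epsilon_1}$, where the normal form over $\mathbb{Q}[\beta]$ is plausibly computable in closed form by induction on $n$.
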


\begin{conj}
The spectral sequence from sutured annular Khovanov homology to Khovanov homology collapses immediately. In the notation above, all the invariants $W_k$ for $k \geq 2$ vanish.
\end{conj}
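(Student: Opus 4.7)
The plan is to show that the canonical $\bbQ[\beta]$-decomposition of $CKh(L)$ from Lemma 1.3 involves only the two simplest summand types, $\complexzero$ and $\complexk{1}$, so that $W_k=0$ for all $k \ge 2$. Equivalently, after passing to a minimal model over $\bbQ[\beta]$, only the powers $\beta^0$ and $\beta^1$ appear in the differential.

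A first formal reduction is to run the Gaussian elimination of Lemma 1.3, prioritising the $\beta^0$ pivots coming from the annular-preserving part $d_{Kh}^0$ of the differential. This contracts a large subcomplex and leaves a $\bbQ[\beta]$-complex whose underlying $\bbQ$-vector space is the sutured annular Khovanov homology $H(CKh, d_{Kh}^0)$, equipped with an induced $\bbQ[\beta]$-linear differential $\delta = \beta\delta_1 + \beta^2\delta_2 + \cdots$, where $\delta_1$ is the operator descended from $d_{Kh}^{-2}$. The conjecture then reduces to the assertion that the higher corrections $\delta_k$ for $k \ge 2$ can be further simplified to zero.

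An attempted proof should exploit the $\fsl$-action on $H(CKh, d_{Kh}^0)$ together with the current-algebra structure of \cite{GLW}. Since $d_{Kh}^{-2}$ decreases annular grading by $2$, it behaves in annular degree like the lowering operator $F \in \fsl$. The current-algebra action provides natural chain-level operators of annular degree $-2$, and a key step would be to identify $\delta_1$ with (a homotopy version of) such an operator, and then to show that coherence for the current-algebra structure---suitably interpreted as an $A_\infty$-statement on the minimal model---forces the higher operations $m_k$ for $k\ge 3$ to vanish, so that no $\delta_k$ with $k\ge 2$ is created.

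The main obstacle, and the reason this remains open, is exactly the failure of the $\fsl$-action to commute with $d_{Kh}^{-2}$, explicitly noted in the excerpt. Without such coherence, Gaussian elimination routinely produces $\beta^k$ entries for $k \ge 2$ in the course of simplification, and the conjecture demands that these always be cancellable by subsequent reductions in a way that has no visible global explanation. A promising auxiliary direction is a local analysis of the annular merge and split rules: $d_{Kh}^{-2}$ is supported only on local moves that mix trivial and non-trivial circles, and one might hope to bound the length of $\beta$-chains in the minimal model by a constant. Testing the conjecture on iterated stabilisations where the annular spread jumps (as in the figure-eight example above), or on families of positive braids where more structure is available, would help distinguish whether the correct argument is $\fsl$-theoretic, combinatorial on resolutions, or something else entirely.
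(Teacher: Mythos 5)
The statement you are addressing is a \emph{conjecture}: the paper offers no proof of it at all, so there is no argument of the authors' to compare yours against, and the relevant question is simply whether your proposal constitutes a proof. It does not, and to your credit your final paragraph essentially concedes this. Your first two steps are sound bookkeeping: running the Gaussian elimination of the structure lemma against the $\beta^0$ pivots of $d_{Kh}^0$ yields a minimal model over $\bbQ[\beta]$ whose underlying triply graded vector space is the sutured annular Khovanov homology, with induced differential $\delta = \beta\delta_1 + \beta^2\delta_2 + \cdots$, and (using that the annular grading pins down the $\beta$-power of every matrix entry) the vanishing of $W_k$ for $k \geq 2$ is indeed equivalent to being able to remove all $\delta_k$, $k \geq 2$, by a change of basis. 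But this reduction is a reformulation of the conjecture, not progress on it: it is exactly the content of the unicity lemma applied to the minimal model.

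The genuine gap is the entire third step. You propose to identify $\delta_1$ with a current-algebra operator of annular degree $-2$ and then invoke an $A_\infty$-coherence statement to kill the higher corrections, but neither half of this is established or even plausibly available: as the paper itself notes, the $\fsl$ (and current-algebra) action commutes only with $d_{Kh}^0$ and \emph{not} with $d_{Kh}^{-2}$, so it descends to the $E_1$ page but carries no known chain-level compatibility with the very differential $\delta_1$ you want to constrain. Moreover, ``behaves in annular degree like the lowering operator $F$'' is a statement about gradings, not about the map; many inequivalent degree $-2$ operators exist, and nothing in your argument identifies $\delta_1$ with a natural one or supplies the coherence that would force $m_k = 0$ for $k \geq 3$. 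Absent that, Gaussian elimination can and generically does produce $\beta^k$ entries with $k \geq 2$, i.e.\ summands $\complexk{k}$, and you have given no mechanism that cancels them. Your suggested local analysis of merge/split configurations and your proposed computational tests are reasonable directions, but as written the proposal is a research plan whose central step is the conjecture itself.
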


It would be interesting to find a pair of non-conjugate braids, neither of which admit destabilizations, but with isotopic closures and with summands in $W_0$ differing in annular grading by at least 4. This would show that any isotopy requires at least two stabilizations.

\bibliographystyle{alpha}
\bibliography{bibliography}

\begin{thebibliography}{AGW14}

\bibitem[AGW14]{MR3147412}
Denis Auroux, J.~Elisenda Grigsby, and Stephan~M. Wehrli.
\newblock Khovanov-{S}eidel quiver algebras and bordered {F}loer homology.
\newblock {\em Selecta Math. (N.S.)}, 20(1):1--55, 2014.
\newblock \arxiv{1107.2841} \mathscinet{MR3147412}
  \doi{10.1007/s00029-012-0106-2}.

\bibitem[APS04]{MR2113902}
Marta~M. Asaeda, J{\'o}zef~H. Przytycki, and Adam~S. Sikora.
\newblock Categorification of the {K}auffman bracket skein module of
  {$I$}-bundles over surfaces.
\newblock {\em Algebr. Geom. Topol.}, 4:1177--1210 (electronic), 2004.
\newblock \arxiv{math.QA/0409414} \mathscinet{MR2113902}
  \doi{10.2140/agt.2004.4.1177}.

\bibitem[BG12]{1212.2222}
John~A. Baldwin and J.~Elisenda Grigsby.
\newblock Categorified invariants and the braid group, 2012.
\newblock \arxiv{1212.2222}.

\bibitem[BN07]{math.GT/0606318}
Dror Bar-Natan.
\newblock Fast {K}hovanov homology computations.
\newblock {\em J. Knot Theory Ramifications}, 16(3):243--255, 2007.
\newblock \arxiv{math.GT/0606318} \mathscinet{MR2320156}.

\bibitem[DAW13]{1303.1986}
J.~Elisenda~Grigsby Denis~Auroux and Stephan~M. Wehrli.
\newblock Sutured {K}hovanov homology, {H}ochschild homology, and the
  {O}zsvath-{S}zab\'o spectral sequence, 2013.
\newblock \arxiv{1303.1986}.

\bibitem[Git04]{math.GT/0401051}
Thomas~A. Gittings.
\newblock Minimum braids: A complete invariant of knots and links, 2004.
\newblock \arxiv{math.GT/040105}.

\bibitem[GLW15]{GLW}
J.~Elisenda Grisby, Anthony Licata, and Stephan~M. Wehrli.
\newblock Annular {K}hovanov homology and knotted {S}chur-{W}eyl
  representations, 2015.
\newblock to appear.

\bibitem[GN13]{1305.2183}
J.~Elisenda Grigsby and Yi~Ni.
\newblock Sutured {K}hovanov homology distinguishes braids from other tangles,
  2013.
\newblock \arxiv{1305.2183}.

\bibitem[GW10]{MR2728482}
J.~Elisenda Grigsby and Stephan~M. Wehrli.
\newblock Khovanov homology, sutured {F}loer homology and annular links.
\newblock {\em Algebr. Geom. Topol.}, 10(4):2009--2039, 2010.
\newblock \arxiv{0907.4375} \mathscinet{MR2728482}
  \doi{10.2140/agt.2010.10.2009}.

\bibitem[GW11]{MR2866927}
J.~Elisenda Grigsby and Stephan~M. Wehrli.
\newblock On gradings in {K}hovanov homology and sutured {F}loer homology.
\newblock In {\em Topology and geometry in dimension three}, volume 560 of {\em
  Contemp. Math.}, pages 111--128. Amer. Math. Soc., Providence, RI, 2011.
\newblock \arxiv{1010.3727} \mathscinet{MR2866927}
  \doi{10.1090/conm/560/11095}.

\bibitem[Jon87]{MR0908150}
Vaughan F.~R. Jones.
\newblock Hecke algebra representations of braid groups and link polynomials.
\newblock {\em Ann. of Math. (2)}, 126(2):335--388, 1987.
\newblock \mathscinet{MR0908150} \doi{10.2307/1971403}.

\bibitem[Kee14]{KeeseThesis}
Hannah Keese.
\newblock Current algebras, categorification and annular {K}hovanov homology,
  2014.
\newblock Honours thesis at the Australian National University, available at
  \url{https://tqft.net/web/research/hannahkeese/}.

\bibitem[QR15]{QR}
Hoel Queffelec and David Rose.
\newblock Sutured annular {K}hovanov-{R}ozansky homology via trace
  decategorification and skew {H}owe duality, 2015.
\newblock to appear.

\end{thebibliography}
\end{document}